\setlist[1]{itemsep=3pt}
\newcommand{\f}{\varphi}
\newcommand{\grad}{\mathrm{grad}}
\newcommand{\R}{\mathbb{R}}
\newcommand{\Po}{P}
\newcommand{\C}{\mathbb{C}}
\newcommand{\N}{\mathbb{N}}
\newcommand{\Z}{\mathbb{Z}}
\newcommand{\RP}{\mathbb{R}\mathrm{P}}
\newtheorem{theorem}{Theorem}
\newtheorem{proposition}[theorem]{Proposition}
\newtheorem{lemma}[theorem]{Lemma}
\newtheorem{definition}[theorem]{Definition}
\theoremstyle{remark} 
\newtheorem{remark}[theorem]{Remark}
\newcommand{\be}{\begin{equation}}
\newcommand{\ee}{\end{equation}}
\title{The probabilistic method in real singularity theory}
\author{Antonio Lerario and Michele Stecconi}
\date{\today}
\begin{document}

\maketitle
\begin{abstract}We explain how to use the probabilistic method to prove the existence of real polynomial singularities with rich topology, i.e. with total Betti number of the maximal possible order. We show how similar ideas can be used to produce real algebraic projective hypersurfaces with a rich structure of umbilical points.
\end{abstract}
%\tableofcontents

\section{Introduction}
The probabilistic method is a nonconstructive method for proving the existence of a prescribed kind of mathematical object. It works by showing that if one randomly chooses objects from a specified class, the probability that the result is of the prescribed kind is strictly greater than zero, and in particular it guarantees the existence of one object with the desired property. 

In the context of real algebraic geometry, this method has been implicitly used first in \cite{NazarovSodin1} and then in \cite{SHSP, GaWe1} for computing a lower bound for the expectation (with respect to an appropriate probability measure) of the Betti numbers of random real hypersurfaces (a probabilistic result), and explicitly used in a similar framework in \cite{systoles}, for proving that complex hypersurfaces contain topologically rich Lagrangian submanifolds, and in \cite{Ancona} (using a different probability measure),  for proving the existence of real algebraic hypersurfaces with rich topology in real algebraic varieties (two deterministic results, see \cref{remark:H16} below).

In this short note we briefly explain  how to apply the method from \cite{Ancona} (which is based on the techniques that we have developed in \cite{DTGRF, MTTRPS}) more generally for the  construction of real algebraic \emph{singularities} with rich topology. More precisely, consider a polynomial map
$$p:S^n\to \R^k$$ all of whose components are homogeneous polynomials $p_1, \ldots, p_k$ of degree $d$.  For $r\geq0$ let also $J^r(S^n,\R^k)$ be the $r$--th jet bundle and $W\subseteq J^r(S^n,\R^k)$ be a semialgebraic subset invariant under diffeomorphisms of $S^n$ (we call sets of this type ``intrinsic'', see \cref{sec:proof}). We denote by 
$$j^rp:S^n\to J^{r}(S^n,\R^k)$$ 
the $r$--th jet extension of $p$ and we consider the set
$$\Sigma^W(p):=j^rp^{-1}(W)\subseteq S^n.$$
We call such a set \emph{the $W$--type singularity of $p$}. Examples of singularities falling in this class are: zero sets of polynomial functions, their critical points, Thom--Boardman singularities.

Using a variation on Thom--Milnor bound, it is not difficult to prove (see \cite[Theorem 1]{MTTRPS}) that for every $W\subseteq J^r(S^n, \R^k)$ there exists $c_1(W)>0$ such that for the generic polynomial map $p:S^n\to\R^k$ of degree $d$, the sum of the Betti numbers of its $W$--type singularity is controlled by
\be\label{eq:bound}b(\Sigma^W(p))\leq c_1(W)d^n.\ee

The aim of the current paper is to prove the following result.

\begin{theorem}\label{thm:main}For every nonempty intrinsic semialgebraic set $W\subseteq J^{r}(S^n, \R^k)$ with $0<\mathrm{codim}(W)\leq n$, there exist $d(W)\in \mathbb{N}$ and $c_2(W)>0$ such that for every $d\geq d(W)$ there is a nonempty open set $U_d^W$ in the space of polynomial maps of degree $d$ with the property that for every $p=(p_1, \ldots, p_k)\in U_d^W$ 
\be \label{eq:lower}b(\Sigma^W(p))\geq c_2(W)d^n.\ee
\end{theorem}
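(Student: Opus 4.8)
The plan is to combine a deterministic local ``barrier'' construction with the Gaussian scaling-limit technology of \cite{DTGRF, MTTRPS} --- applied exactly as in \cite{Ancona} --- and then to upgrade an expectation bound into the existence of an open set using the local constancy of the Betti numbers on the transversality locus. The starting point is a local model. Because $W\neq\emptyset$ is semialgebraic it carries a Whitney stratification; fix a point $z_0$ in its open dense top-dimensional stratum $W_{\mathrm{reg}}$, so that near $z_0$ the set $W$ is a submanifold of codimension $c:=\mathrm{codim}(W)$ with $1\le c\le n$. I would construct a smooth map $f\colon\R^n\to\R^k$ and a ball $B\subset\R^n$ such that $j^rf$ meets $W_{\mathrm{reg}}$ transversally along a round sphere $S^{n-c}\subset \mathrm{int}(B)$ and such that $\Sigma^W(f)=(j^rf)^{-1}(W)$ does not meet $\partial B$; here $c\le n$ is what makes a nonempty transverse intersection possible, $c\ge 1$ is what makes $\Sigma^W$ a proper, ``separating'' subset, and the invariance of $W$ under diffeomorphisms is what lets one put the picture in a normal form independent of the ambient polynomial structure (so that $f$ can be taken, or $C^r$-approximated, within the class for which the scaling-limit support statement below applies). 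Since transversality to a fixed compact submanifold is an open condition in $C^r$, every $g$ with $\|g-f\|_{C^r(\bar B)}$ small enough still has a connected component of $\Sigma^W(g)$ contained in $\mathrm{int}(B)$ --- call this open condition the \emph{barrier event}.

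Next I would randomize. Endow the space of degree-$d$ polynomial maps with the Gaussian measure $\mu_d$ of \cite{Ancona}, with independent components and chosen so that the natural oscillation scale of the restriction to $S^n$ is $1/d$. Fix $x_0\in S^n$, pass to the rescaled chart $u\mapsto\exp_{x_0}(u/d)$; by \cite{DTGRF, MTTRPS} the rescaled field converges in distribution, in the $C^{r+1}$ topology on compact sets, to a non-degenerate Gaussian field $P_\infty\colon\R^n\to\R^k$ whose law has full support in $C^r(\bar B,\R^k)$. Hence $\mathbb{P}\big(\|P_\infty-f\|_{C^r(\bar B)}<\varepsilon\big)>0$ for the $f$ and $\varepsilon$ of the first step, and since the barrier event is $C^r$-open the portmanteau theorem yields $p_0>0$ and $d(W)\in\N$ with
\[
\mathbb{P}_{\mu_d}\big(\Sigma^W(p)\text{ has a connected component contained in }B(x_i,C/d)\big)\ \ge\ p_0
\]
for every $d\ge d(W)$ and every geodesic ball $B(x_i,C/d)\subset S^n$ (the bound is uniform in $x_i$ by the isotropy of $\mu_d$). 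Now pack $N(d)\ge c_0 d^n$ pairwise disjoint such balls into $S^n$ --- possible since $\mathrm{vol}(S^n)$ is fixed --- and observe that on the barrier event $E_i$ for $B(x_i,C/d)$ the set $\Sigma^W(p)$ has a component inside $B(x_i,C/d)$; distinct indices with $E_i$ give distinct components, so $b(\Sigma^W(p))\ge b_0(\Sigma^W(p))\ge\sum_i\mathbf{1}_{E_i}$. By linearity of expectation --- crucially, no quasi-independence of the $E_i$ is needed ---
\[
\mathbb{E}_{\mu_d}\big[b(\Sigma^W(p))\big]\ \ge\ \sum_{i=1}^{N(d)}\mathbb{P}_{\mu_d}(E_i)\ \ge\ p_0\, c_0\, d^n\ =:\ c_2(W)\, d^n,\qquad d\ge d(W).
\]

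It remains to produce the open set. Let $G_d$ be the set of degree-$d$ polynomial maps $p$ whose jet $j^rp$ is transverse to the fixed Whitney stratification of $W$; by stratified transversality $G_d$ is open and dense, and its complement is a proper semialgebraic subset, hence $\mu_d$-null, so $\mu_d(G_d)=1$. On $G_d$ the function $p\mapsto b(\Sigma^W(p))$ is locally constant: two maps in the same connected component of $G_d$ can be joined by a path inside $G_d$, along which $\Sigma^W(p_t)$ forms a locally trivial family by Thom's isotopy lemma (properness is free since $S^n$ is compact), so the fibres are homeomorphic. Since $\mu_d(G_d)=1$, the estimate $\mathbb{E}_{\mu_d}[b(\Sigma^W(p))]\ge c_2 d^n$ gives some $p^*\in G_d$ with $b(\Sigma^W(p^*))\ge c_2 d^n$, and a connected neighbourhood $U_d^W$ of $p^*$ in $G_d$ is then a nonempty open set on which $b(\Sigma^W(p))\ge c_2 d^n$, which is \eqref{eq:lower}.

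The main obstacle is the first step: realizing a robust local barrier for an \emph{arbitrary} nonempty intrinsic semialgebraic $W$ with $1\le\mathrm{codim}(W)\le n$. One must stratify $W$, descend to a smooth stratum, and then engineer a map $f$ whose $r$-jet extension has enough freedom to be made transverse to that stratum along a prescribed compact submanifold sitting in a fixed ball and away from its boundary; the diffeomorphism-invariance of $W$ is exactly what reduces this to a normal-form computation, and one must simultaneously keep $f$ inside the function class governed by the full-support property of $P_\infty$. The other ingredients --- the scaling limit and non-degeneracy of $P_\infty$, the Thom transversality and isotopy statements, the variation of the Thom--Milnor bound already quoted as \eqref{eq:bound} --- are standard and are imported from \cite{DTGRF, MTTRPS, Ancona}.
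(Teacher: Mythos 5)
Your proposal follows essentially the same route as the paper: construct a robust local model $f$, use the $L^2(S^n)$ Gaussian ensemble $\mu_d$ whose rescalings converge to a full-support limiting field, pack $\sim d^n$ disjoint balls, lower-bound an expectation by linearity, and then pass from expectation to a deterministic open set via the full-measure transversality locus and Thom's isotopy lemma. The last four steps are carried out correctly and match the paper in all the ways that matter.

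The one place where the argument has a genuine gap is exactly where you flag ``the main obstacle'': the construction of a smooth $f\colon\R^n\to\R^k$ on a ball $B$ with $j^rf$ transverse to $W$ and $\Sigma^W(f)$ a nonempty compact subset of $\mathrm{int}(B)$. You treat this as a normal-form exercise enabled by diffeomorphism-invariance, but when $r\ge 1$ the map $j^rf$ is a \emph{holonomic} section of $J^r(n,k)$, so prescribing its image and transversality behaviour is genuinely constrained and cannot be done by freely moving a section of the jet bundle. The paper resolves this in \cref{sec:step1} in two stages that are absent from your sketch: first one builds an arbitrary (generally non-holonomic) section $\sigma$ of $J^r(n,k)$ transverse to the model $W_0$ with $\sigma^{-1}(W_0)\subset\mathrm{int}(D)$ and the desired topology, using only the codimension hypothesis; then one invokes the Holonomic Approximation Theorem of Eliashberg--Mishachev to replace $\sigma$ by a holonomic section $j^rf$ that is $C^0$-close to $\sigma\circ h$ for a small homeomorphism $h$, preserving transversality and containment in $\mathrm{int}(D)$, and finally a result from \cite{MTTRPS} (Theorem 18 there) guarantees $b_i(j^rf^{-1}(W_0))\ge b_i(\sigma^{-1}(W_0))$. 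Without this holonomic-approximation step the local barrier is not established for $r\ge1$, and the rest of the argument has nothing to feed on. (For $r=0$ the issue disappears, as the paper notes.) A secondary, minor point: by asking $\Sigma^W(f)$ to be a round sphere $S^{n-c}$ you get a valid local model for the total Betti number as in the theorem, but this specific choice cannot give the stronger remark in the paper where every $b_i$ with $0\le i\le n-\mathrm{codim}(W)$ is required to grow like $d^n$, since $S^{n-c}$ has vanishing intermediate Betti numbers; the paper instead arranges $b_i(\Sigma_0)\ge 1$ for all such $i$.
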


In this general case of maps singularities we are not aware of any other method, other than the one proposed in the current paper, for producing maps with rich singularities (of the maximal possible order).
\begin{remark}We will actually prove the stronger statement that one can find an open set $U_{d}^W$ in the space of polynomials such that for every $0\leq i\leq n-\mathrm{codim}(W)$  the inequality \eqref{eq:lower} holds for each single Betti number $b_i(\Sigma^W(p))$ for all $p\in U_{i,d}^W$. In other words, in Equation \eqref{eq:lower} we can replace $b(\Sigma^W(p))$ with $\underline{b}_{s}(\Sigma^W(p))$, where
\be 
\underline{b}_{s}(\Sigma):=\min_{0\le i\le s} b_i(\Sigma), \qquad s=n-\mathrm{codim}(W),
\ee
and the theorem remains true, with a possibly different constant $c_2(W)$.

The assumption $\mathrm{codim}(W)\leq n$ is necessary for our method of proof: in fact  the $r$--th jet of a generic polynomial map $p:S^n\to \R^k$ of large enough degree misses  $W$ if $\mathrm{codim}(W)>n$ (by the Parametric Transversality Theorem, see \cite[Section 1.7]{EliMis}).
\end{remark}

The proof of \cref{thm:main} uses a combination of results and ideas from \cite{DTGRF, MTTRPS, Ancona} and goes along the steps described in \cref{sec:proof}. In \cref{sec:umbilics} we will actually show that the same strategy of proof can be used to produce rich ``hybrid'' singularities, such as the structure of the umbilics of a hypersurface (which depends on the Riemannian metric also), see \cref{thm:umbilics}.
\subsection*{Acknwoledgements}The authors wish to thank R. Piene and B. Shapiro for interesting discussions and for bringing the problem to their attention. A. Lerario is supported by the Knuth and Alice Wallenberg Foundation. M. Stecconi was supported by the Luxembourg National Research Fund (Grant: 021/16236290/HDSA).
\section{Proof of \cref{thm:main}}\label{sec:proof}
\subsection{Realize the given singularity with a smooth map}\label{sec:step1} First recall from \cite{MTTRPS} that $W\subseteq J^{r}(S^n, \R^k)$ is called \emph{intrinsic} if there is  $W_0\subseteq J^r(n,k)$, called the \emph{model}, such that for any embedding $\varphi\colon \R^n\hookrightarrow S^n$, one has that $j^r\varphi^*(W)=W_0$.

Given $W$ intrinsic, with model $W_0$, we claim that there exists $f\in C^{\infty}(D, \R^k)$ such that (i) $j^rf$ is transversal to $W_0$, (ii) $\Sigma^{W_0}(f)$ is entirely contained in the interior of $D$ and (iii) $b_i(\Sigma^{W_0}(f))\geq 1$ for every $0\leq i\leq n-\mathrm{codim}(W)$. 

This guarantees that the singularity $W$ is smoothly realizable in a stable way with nontrivial cycles in  all dimensions. The existence of such $f$ is nontrivial and follows from \cite[Corollary 20]{MTTRPS}. 

In fact, it is easy to construct a section $\sigma$ of the jet bundle $J^{r}(n, k)\to \R^n$ which is transversal to $W_0$, with $\sigma^{-1}(W)\subset \mathrm{int}(D)$ and with $b_i(\sigma^{-1}(W_0))\geq 1$ for every $0\leq i\leq n-\mathrm{codim}(W_0)$, using the hypothesis on the codimension of $W_0$. However this section does not need to be holonomic. Using the Holonomic Approximation Theorem \cite{EliMis} one can find a {$C^0$ small homeomorphism $h:D\to D$, and a} new section $\widetilde{\sigma}$ \emph{holonomic}, i.e. such that $\widetilde{\sigma}=j^rf$, which is $C^0$ close to {$\sigma\circ h$, } so that $j^rf^{-1}(W)\subset \mathrm{int}(D)$, and which is transversal to $W_0$. Finally, \cite[Theorem 18]{MTTRPS} guarantees that $b_i(j^rf^{-1}(W_0))\geq b_i(\sigma^{-1}(W_0))$ for every $0\leq i\leq n-\mathrm{codim}(W_0)$.

\begin{remark}This step is not needed in the case $r=0$, since in this case sections are already holonomic.
\end{remark}

\subsection{Identify a nice open set in the space of functions}\label{sec:open}Let $f_0:D\to \R^k$ be the function constructed above, and denote by $\Sigma_0:=\Sigma^{W}(f_0)$. Since $j^rf_0$ is transversal to $W$, then there is a $C^{r+1}$ neighborhood $U_0$ of $f_0$ such that for any $g:D\to \R^k$ in $U_0$ (i.e for any $g$ sufficiently close to $f_0$ in the $C^{r+1}$--topology) the pairs $(D, \Sigma_0)$ and $(D, \Sigma^W(g))$ are diffeomorphic. This follows from Thom's Isotopy Lemma.  

%Consider now the open set $U_0\subseteq C^{\infty}(D, \R^k)$ consisting of smooth functions $:D\to \R^k$ such that $j^rf$ is transversal to $W_0$ and $(D, j^{r}(f))$ is diffeomorphic to  for every $0\leq i\leq n-\mathrm{codim}(W)$ the set $\Sigma^W(f)$ has at least a connected component $C_i$ of $\Sigma^{W}(f)$ entirely contained in the interior of $D$ and with $b_i(C_i)\geq 1$. 
%
%The previous step (\cref{sec:step1}) implies that $U^W$ is \emph{nonempty}. In fact, one can construct the function $f:D\to\R^k$ from \cref{sec:step1} as follows: first start with a section $\sigma$ of the jet bundle such that $\sigma^{-1}(W)\subset \mathrm{int}(D)$, with transversal intersection, and with $b_i(\sigma^{-1}(W_0))\geq 1$ for every $0\leq i\leq n-\mathrm{codim}(W_0)$ . Then the section $\widetilde{\sigma}$ (which is $C^0$ close to $\sigma$) also has the property $\widetilde{\sigma}^{-1}(W)\subset \mathrm{int}(D).$ Since $\widetilde{\sigma}=j^rf$ and $j^rf$ is transversal to $W$, . This defines the ``nice'' open set $U^W$.

\subsection{Choose an appropriate measure on the space of polynomials}\label{sec:step3}We choose an appropriate probability measure $\mu_d$ on the vector space 
$$\Po_{n,d}^k:=\R[x_0, \ldots, x_n]_{(d)}^k$$ of polynomial maps of degree $d$. This measure will be the Gaussian measure associated to the $L^2(S^n)$--scalar product. More precisely, for $p=(p_1, \ldots, p_k)\in \Po_{n,d}^k$ we define $\|p\|^2:=\int_{S^n}\sum_{i=1}^k |p_i(x)|^2 \mathrm{d}x,$
where ``$\mathrm{d}x$'' denotes the integration on the sphere with respect to its standard volume form. Then we define a Gaussian probability measure $\mu_d$ on $\Po_{n,d}^k$ by setting, for every Borel set $A\subseteq \Po_{n,d}^k$:
\be\label{eq:measure}\mu_d\bigg\{p\in A\bigg\}:=\frac{1}{(2\pi)^{N/2}}\int_{A}e^{-\frac{\|p\|^2}{2}}\mathrm{d}p,\ee
where $N=\mathrm{dim}(\Po_{n,d}^k)$ and ``$\mathrm{d}p$'' denotes the integration with respect to the Lebesgue measure\footnote{This is the Lebesgue measure of the Euclidean space $(\Po_{n,d}^k, \langle \cdot, \cdot \rangle_{L^2(S^n)})$.}. 
\begin{remark}In the context of real algebraic geometry, the measure $\mu_d$ was first used in \cite{SHSP}.  This measure is invariant under the action on the space of polynomials of the orthogonal group by change of variables, but there are other natural invariant measures one can consider \cite{Kostlan}. These measures, however, might not have the properties we need: for instance,  the Bombieri--Weyl measure would not produce rich objects, see \cref{sec:BW}. The key idea of \cite{Ancona} is to use a version of the above probability measure $\mu_d$ on sections of real line bundles.
\end{remark}
\subsection{Compute the limiting measure of the nice open set at the local scale}\label{sec:step4}Let $z\in S^n$ be a point and denote by $D:=D(0, 1)\subset T_{z}S^n\simeq \R^n$ the unit disk in the tangent space and by $\mathrm{exp}_z:D\to S^n$ the Riemannian exponential map. For every polynomial map $p:S^n\to \R^k$ we consider the smooth map ${f_{z,d}}:D\to \R^k$ defined by
\be \label{eq:rescaling}f_{z,d}(v):=d^{-\frac{n}{2}}p\left(\mathrm{exp}_z(d^{-1} v)\right).\ee
Since we have defined a probability measure $\mu_d$ on the space of polynomials, the map $f_{z,d}:D\to \R^k$ (which depends on $p$) can be thought as a random variable with values in $C^{\infty}(D, \R^k)$. 

The key property of this construction is the following: for every nonempty open set $U\subseteq C^{\infty}(D, \R^k)$ there exists $c(U)>0$ such that for every $z\in S^n$, 
\be \label{eq:lowerlimit}\liminf_{d\to \infty}\mu_d\bigg\{f_{z,d}\in U\bigg\}\geq c(U).\ee
\begin{proof}[Proof of \eqref{eq:lowerlimit}] We denote by $K_{d}(x,y):D\times D\to {\R^{k\times k}}$ the covariance function of $f_d:=f_{z,d}$ (which does not depend on $z\in S^n $, since the measure $\mu_d$ is orthogonally invariant):
$$K_{d}(u,v):=\mathbb{E}\left(f_d(u){f_d(v)^T}\right)\in \R^{k\times k},$$
where the expectation is taken with respect to the probability measure $\mu_d$.
It is a classical fact (see \cite{Hrmander1968TheSF}) that as $d\to \infty$ the function $K_d$ converges in the $C^{\infty}$--topology to
\be\label{eq:limitcovariance}
K_{\infty}(u,v)=\int_{D}e^{i\langle u-v , \xi\rangle}\mathrm{d}\xi \cdot \mathbf{1}_{k}.
\ee
Now, by \cite[Theorem 5]{DTGRF}, the convergence in the $C^{\infty}$--topology of $K_d$ implies the existence of a random variable $f_\infty\in C^{\infty}(D, \R^k)$ with the property that for every nonempty  open set $U\subseteq C^{\infty}(D, \R^k)$
\be\label{eq:defc(U)}\liminf_{d\to \infty}\mu_d\bigg\{f_d\in U\bigg\}\geq \mathbb{P}\bigg\{f_\infty\in U\bigg\}=:c(U).\ee
It remains to prove that $c(U)>0$. To this end we use \cite[Theorem 6]{DTGRF} which tells that the support of the random variable $f_\infty$ can be computed as the closure, in the $C^{\infty}$ topology, of the set of all the functions of the form $u\mapsto K_{\infty}(u, v)\lambda$, where $v$ ranges over $D$ {and $\lambda$ ranges in $\R^k$}. Using the explicit description of $K_{\infty}$, one can show that this space contains all the monomials $u_1^{\alpha_1}\cdots u_n^{\alpha_n}$ (see \cite[Lemma 3.2]{Ancona}) and, since polynomials are dense in the $C^{\infty}$--topology, the support of $f_\infty$ is the whole $C^{\infty}(D, \R^k)$. This means that for every open set $U\subseteq C^{\infty}(D,\R^k)$ the probability $c(U)$ is strictly positive. 
\end{proof}

\subsection{Compute an expectation at the global scale}\label{sec:step5}Given $\Sigma_0$ from \cref{sec:open}, consider the function $\nu:P_{n,d}^k\to \N$ defined by
$$\nu(p):=\textrm{number of connected components of $\Sigma^{W}(p)$ which are homeomorphic to $\Sigma_0$}.$$
Using the conclusions from \cref{sec:step4}, we show that there exist $\widetilde{c}(W)>0$ and $d(W)>0$ such that for all $d\geq d(W)$
\be\label{eq:integral0}\int_{\Po_{n,d}}\nu(p)\,\mu_d(\mathrm{d}p)\geq \widetilde{c}(W)d^n.\ee
(Notice that the integrand is bounded a.e. by \eqref{eq:bound}, since $\nu(p)\leq b_0(\Sigma^W(p))$.) In order to prove \eqref{eq:integral0} we introduce first some preliminary objects. 

First, by a doubling argument, for every $d\in \N$ we can find points $z_1, \ldots, z_{m_d}\in S^n$ with $m_d\geq c_3d^n$, $c_3>0$, such that for every $i\neq j$ the spherical Riemannian balls $B(z_i, d^{-1})$ and $B(z_j, d^{-1})$ do not intersect. 

Then, for every $z\in S^n$ we define the following set $E_{z, d}\subset P_{n,k}^d$:
$$E_{z, d}:=\bigg\{p\,\bigg|\, \textrm{$(B(z, d^{-1}), \Sigma^W(p))$ is diffeomorphic to $(D, \Sigma_0)$ }\bigg\}.$$
Notice that $E_{z,d}$ contains the set  of polynomials $p$ such that $f_{z,d}\in U_0$ where $U_0$ comes from \cref{sec:open}. 

For every $z\in S^n$ we also consider the following function $\nu_{z,d}:P_{n,d}^k\to \N$
\begin{align}\nu_{z,d}(p):=&\textrm{number of connected components of $\Sigma^W(p)$ which are entirely contained in $B(z, d^{-1})$} \\
&\textrm{and which are diffeomorphic to  $\Sigma_0$}.\end{align}

Notice that $\nu_{z,d}|_{E_{z, d}}\geq 1$. 
\begin{proof}[Proof of  \eqref{eq:integral0}] Using the above notation, we have
\begin{align}\int_{\Po_{n,d}}\nu(p)\,\mu_d(\mathrm{d}p)&\geq \int_{\Po_{n,d}}\sum_{j=1}^{m_d}\nu_{z_j, d}(p)\,\mu_d(\mathrm{d}p)=\sum_{j=1}^{m_d} \int_{\Po_{n,d}}\nu_{z_j, d}(p)\,\mu_d(\mathrm{d}p)\\
&\geq \sum_{j=1}^{m_d} \int_{E_{z_j, d}}\nu_{z_j, d}(p)\,\mu_d(\mathrm{d}p)\\
&\geq  \sum_{j=1}^{m_d} \int_{E_{z_j, d}}1\,\mu_d(\mathrm{d}p)=\sum_{j=1}^{m_d}\mu_d(E_{z_j, d})\\
&\geq \sum_{j=1}^{m_d} \mu_d\bigg\{f_{z_j,d}\in U^W\bigg\}=m_d\cdot\mu_d\bigg\{f_{z_1,d}\in U^W\bigg\}\\
&\geq c_3d^n\mu_d\bigg\{f_{z_1,d}\in U^W\bigg\}.
\end{align}
Taking the $\liminf_{d\to \infty}$ on both sides and using the limit from \eqref{eq:lowerlimit}, then \eqref{eq:integral0} follows (here $\widetilde{c}(W):=c_3\cdot c(U^{W})>0$).
\end{proof}
\begin{remark}Notice in particular that, since for every $p\in P_{n,d}^k$ we have the inequality $b(\Sigma^W(p))\geq b(\Sigma_0)\nu(p)$, then
\be\label{eq:integral1}\int_{P_{n,d}^k}b(\Sigma^W(p))\,\mu_d(\mathrm{d}p)\geq c(W)d^n.\ee
\end{remark}

\subsection{Conclude the argument}\label{sec:step6} Since the integral in \eqref{eq:integral0} is upper bounded by the maximum of $\nu(p)$, taken over the set of $p$ with $j^rp$ transversal to $W$ (this set is a full measure semialgebraic set in $P_{n,d}^k$), then for every $d$ large enough there exists $p$ with $j^rp$ transversal to $W$ and such that $\nu(p)\geq c_0 d^n$. For such a $p$ and for every $0\leq i\leq n-\textrm{codim}(W)$ one has
$$b_i(\Sigma^W(p))\geq \nu(p)b_i(\Sigma_0)\geq c_{2,i}(W)d^n,$$
where $c_{2,i}(W):=b_i(\Sigma_0)\widetilde{c}(W)$.
Since the jet of $p$ is transversal to $W$, the same property holds for all $\widetilde{p}$ in a neighborhood $U_d$ of $p$.

This concludes the proof of \cref{thm:main}, in its stronger form. (The statement as in \cref{thm:main} follows already from the case $i=0$.)
\section{Remarks and examples}\label{sec:examples}
%\begin{remark}[Affine, spherical and projective singularities]
%\end{remark}
\subsection{Hypersurfaces with rich topology}\label{remark:H16}
The total Betti numbers of a real hypersurface $\Sigma$ in a real algebraic variety $X$  is constrained by the total  Betti number of its complex part $\Sigma(\C)$ by Smith--Thom inequality:
$$b(\Sigma;\Z_2)\leq b(\Sigma(\C);\Z_2)=O(d^n).$$ 
The hypersurface is said \emph{maximal} it if attains the above inequality. Asymptotically maximal hypersurfaces (those with maximum possible total Betti number at leading order in the degree) exist in projective spaces \cite{ItenbergViro} and in toric
varieties \cite{Bertrand}.  From these results one can deduce similar results on the sphere, therefore there is no need to use the method from this paper to prove the existence of spherical hypersurfaces with rich topology. In general algebraic varieties, however, results of this type are not known. To our knowledge, in this context the first deterministic result using probabilistic ideas is \cite{Ancona}, where M. Ancona used the method outlined here to show that every real algebraic variety contains real algebraic hypersurfaces whose Betti numbers grow as the maximal possible order.  
\subsection{Complex bounds that \emph{cannot} be attained}One should be careful about expecting in general that real maximal objects are those whose sum of the Betti numbers equals the sum of the Betti numbers of their complex part. This is true, for example, for projective hypersurfaces (see \cref{remark:H16}) and for the critical points of a polynomial function on the sphere (see \cite{Kozhasov}). However, in general, this is not true. For instance, as noticed in \cite{RRS}, Klein’s formula combined with Pl\"ucker's formula imply that a real algebraic projective plane curve of degree $d$ has at most $d(d-2)$ real inflection points, whereas the number of complex inflection points is $3d(d-2)$. In other words, only a fraction (one third) of the total number of inflection points can be real. In view of this, since the proof of the bound \eqref{eq:bound} ultimately uses B\'ezout's Theorem (after reducing the problem to a stratified Morse Theory problem),  \cref{thm:main}  is in some sense the best that one can hope for. 

\subsection{Maximizing the constants}We believe that the constant $c_2(W)>0$ from \cref{thm:main} is far from optimal. For instance, if this method is applied for constructing  curves in the sphere $S^2$ with many ovals, i.e. with the choice $r=0$, $W=S^2\times \{0\}\subset J^0(S^2, \R)$, the constant is quite small: M. Nastasescu \cite{Nastasescu} has done numerical simulations for the expected number of ovals of a random curve with respect to our measure \eqref{eq:measure}, showing that
$$\frac{1}{d^2}\int_{P_{2,d}}b_0(\Sigma^{\{S^2\times \{0\}\}}(p))\mu_d(\mathrm{d} p)\approx 0.0195,$$
but the maximal curves have $\frac{(d-1)(d-2)}{2}+1=\frac{d^2}{2}+O(d)$ ovals. A possibility for increasing the constant is changing the probability measure.  We suspect that choosing the $L^2$--measure concentrated on top spherical harmonics should maximize (or at least increase) this  constant, but still we do not think that this method can produce maximal objects. (The exception to this statement is the case of zeroes of polynomials of one variable: spherical harmonics of the top degree in this case are linear combinations of $\cos (d\theta)$ and $\sin (d\theta)$ and therefore have the maximal possible number of zeroes.)

\subsection{How constructive is this method?}\label{sec:BW}
In the case of projective hypersurfaces, the construction of maximal objects typically uses (variations of) Viro's patchwork. If one tries to write a maximal hypersurface by putting random Gaussian coefficients in front of the (rescaled) monomial basis, the probability that she will get a hypersurface with rich topology decays exponentially as $d\to \infty$. More precisely, if we sample $p\in P_{n,d}$ according to the law\footnote{This is called a random Bombieri--Weyl polynomial, or Kostlan polynomial.}
$$p(x)=\sum_{|\alpha|=d}\xi_{\alpha} \left(\frac{d!}{\alpha_0!\cdot\alpha_n!}\right)^{1/2}x_0^{\alpha_0}\cdots x_n^{\alpha_n},$$
where $\{\xi_{\alpha}\}$ is a family of independent standard Gaussians, then \cite{DiattaLerario} implies that for every $c>0$ there exist $a_1, a_2>0$ such that
$$\mathbb{P}\bigg\{b(Z(p))\geq c d^n\bigg\}\leq a_1e^{-c_2 d}.$$
Similar statements hold for polynomial singularities \cite{BKL}.

In the general case of maps singularities we are not aware of any other method, other than the one proposed in the current paper, for producing maps with singularities of the maximal possible order. Moreover, the construction of the probability measure $\mu_d$ from the current paper gives a way to build polynomial maps with rich singularities with high probability, as follows. First recall that an equivalent way to define the probability distribution $\mu_d$ is by defining the random polynomial (i.e. a random variable with values in the space of polynomials) 
$$p(x):=\sum_{d-\ell \in 2\mathbb{N}}\sum_{j\in J_\ell}\xi_{\ell, j}\|x\|^{d-\ell}h_{\ell, j}\left(\frac{x}{\|x\|}\right),$$ 
where, for every $\ell$ such that $d-\ell$ is even, $\{h_{j, \ell}\}_{j\in J_\ell}$ is an $L^2(S^n)$--orthonormal basis for the space of harmonic polynomials of degree $\ell$, and $\{\xi_{\ell, j}\}_{\ell, j}$ is a family of independent standard Gaussian variables (see \cite{SHSP}). Then the integral \eqref{eq:integral1} equals
$$\int_{\Po_{n,d}}b_i(\Sigma^W(p))=\mathbb{E}b_i(\Sigma^W(p)).$$
Therefore, by putting random coefficients in front of the spherical harmonics basis, the probability of getting a map with rich singularities can be bounded from below. More precisely, letting $0<c_2(W)\leq c_1(W)$ be the constants from \eqref{eq:bound} and \cref{thm:main}, respectively, then for every $0\leq c\leq c_1(W)$, 
\be \label{eq:reversemarkov}\mathbb{P}\bigg\{b(\Sigma^W(p))\geq c d^n\bigg\}\geq 1-\frac{c_1(W)-c_2(W)}{c_1(W)-c},\ee
(which, of course, is interesting when $c$ is far from $c_1(W)$).
\begin{proof}[Proof of \eqref{eq:reversemarkov}]Recall Markov's inequality: for a non--negative random variable $\zeta\geq 0$ and for every $t> 0$,
\be\label{eq:markov}\mathbb{P}\left\{ \zeta\geq t\right\}\leq \frac{\mathbb{E}\zeta}{t}.\ee
Let now $\zeta:P_{n,d}^k\to [0, \infty)$ be the non--negative random variable $\zeta:=c_1(W)d^n-b(\Sigma^W(\cdot))$ (non--negativity a.e. follows from \eqref{eq:bound}). Then
\begin{align}\mathbb{P}\bigg\{b(\Sigma^W(p))\geq c d^n\bigg\}&=1-\mathbb{P}\bigg\{b(\Sigma^W(p))\leq c d^n\bigg\}=1-\mathbb{P}\bigg\{\zeta\geq (c_1(W)-c)d^n\bigg\}\\
&\geq 1-\frac{\mathbb{E}\zeta}{(c_1(W)-c)d^n}\quad \textrm{(by \eqref{eq:markov})}\\
&\geq 1-\frac{c_1(W)d^n-c_2(W)d^n}{(c_1(W)-c)d^n}\quad \textrm{(by \cref{thm:main})}\\
&= 1-\frac{c_1(W)-c_2(W)}{c_1(W)-c}.
\end{align}
\end{proof}
\begin{remark}The price that one needs to pay to get rich singularities in the spherical harmonics basis (compared to the monomial one) is having an expression for the spherical harmonics themselves. There is an interesting inductive construction for building spherical harmonics of degree $\ell$ on $S^n$ once one knows all the spherical harmonics of degree up to $\ell$ on the sphere $S^{n-1}$ (see \cite[Section 7.2]{SHSP}). In principle one can therefore start from the trigonometric functions (the harmonics on $S^1$) and build up all the $n$--dimensional harmonics, but she will need to know also the Gegenbauer polynomials. 
\end{remark}

%\subsection{Critical points}

%\subsection{A Gaussian measure on the space of polynomials}
%Let $\Po_{n,d}:=\R[x_0, \ldots, x_n]_{(d)}\simeq \R^N$, endowed with the $L^2(S^n)$--scalar product:
%$$\langle p, q\rangle_{L^2(S^n)}:=\int_{S^n}p(x)q(x) \sigma(\mathrm{d}x),$$
%where $\sigma$ denotes the standard volume measure on the sphere. We endow $\Po_{n,d}^k:=\Po_{n,d}\oplus\cdots\oplus \Po_{n,d}\simeq \R^{kN}$ ($k$ summands) with the probability measure defined, for every Borel set $U\subseteq \Po_{n,d}^k$ by
%$$\mathbb{P}\left\{(\Po_1, \ldots, \Po_k)\in U\right\}:=\frac{1}{(2\pi)^\frac{N}{2}}\int_{U}e^{-\frac{\sum_{j=1}^k \|\Po_j\|^2}{2}}\mathrm{d}p$$
%
%For $k\in \N$ let also $V_{n,k}$ be the space of homogeneous harmonic polynomials of degree $k$. Then the space $\Po_{n,d}$ can be written as an orthogonal direct sum
%$$\Po_{n,d}=\bigoplus_{d-k\in 2\mathbb{N}}\|x\|^2\cdot V_{n,k}.$$
%Recall that restricting the polynomials from $V_{n,k}$ to the unit sphere one obtains precisely the space of \emph{spherical harmonics of degree $k$}:
%$$V_{n,k}\simeq\{f:S^n\to \R\,|\, \Delta_{S^n}f=-k(n-k+1)f\}.$$
%
%\subsection{Bombieri--Weyl polynomials}

\subsection{A variation on the argument: hypersurfaces with many umbilics}\label{sec:umbilics}The same scheme from \cref{sec:proof} can be used to prove the existence of other type of rich singularities. We discuss in this section the example the existence of algebraic hypersurfaces with many umbilics, as suggested by B. Shapiro.
To explain the result, let us fix some notation first.

Following \cite{multiple}, for every $n\in \N$ and for every $w=(w_1, \ldots, w_n)\in \N^n$ such that 
$$\mu(w):=\sum_{i=1}^n i w_i=n,$$ we denote by $\mathscr{C}_n^w\subset \mathrm{Sym}(n, \R)$ the set of real symmetric matrices with exactly $w_i$ eigenvalues of multiplicity $i$, for every $i=1, \ldots, n$. In this way, the set of symmetric matrices with multiple eigenvalues equals
\be\label{eq:stratify}\Delta_n:=\bigsqcup_{\{w\,|\, w_1<n\}}\mathscr{C}_{n}^w\ee
(its complement is $\mathrm{Sym}(n, \R)\setminus \Delta_n=\mathscr{C}_n^{(n, 0,\ldots, 0)}$).  The decomposition \eqref{eq:stratify} gives a semialgebraic stratification of $\Delta_n$ \cite{SV} whose strata have codimension \cite{Arnold}:
\be \label{eq:codimension}k(w):=\mathrm{codim}_{\mathrm{Sym}(n, \R)}\left(\mathscr{C}_n^w\right)=\sum_{i=1}^n\frac{(i-1)(i+2)}{2}w_i,\ee
Notice in particular that $\Delta_n$ has codimension $2$.

Let now $(M,g)$ be a smooth Riemannian manifold of dimension $n$ and $\Sigma\subset M$ be a smooth hypersurface. For every $z\in \Sigma$ let $\nu_z\in T_zM$ be a unit normal to $\Sigma$ (the unit normal is defined up to a sign). The \emph{second fundamental form} of $\Sigma$ at $z$ in the direction of $\nu_z$ is the quadratic form $h_{z}^\Sigma:T_z\Sigma\to \R$ defined by
\be\label{eq:second}h_z^{\Sigma}(v):=g_z(\nu_z, \nabla_vv),\quad v\in T_z\Sigma,\ee
where $\nabla$ is the Levi--Civita connection of $(M, g)$.  
For every $w=(w_1, \ldots, w_{n-1})\in \N^n$ such that $\mu(w)=n-1$ we denote now by $\Sigma^w$ the set of points in $\Sigma$ such that the symmetric matrix representing $h_z^\Sigma$ in some (and hence all) orthonormal basis for $T_z\Sigma$ belongs to $\mathscr{C}_{n-1}^w$ (this definition does not depend on the choice of the unit normal).
Notice that, if the second fundamental form of $\Sigma$ has the appropriate transversality property, the codimension of the set $\Sigma^w$ in $\Sigma$ is given by $k(w)$, as in \eqref{eq:codimension} with the sum only up to $n-1$. 
For a given $w=(w_1, \ldots, w_{n-1})\in \N^n$ such that $\mu(w)=n-1$ we will call the set $\Sigma^w$ the set of \emph{$w$--umbilics of $\Sigma$}.
(For example, when $n=3$, $\Sigma$ is a surface and the set $\Sigma^{(0, 1)}$ consists of the standard umbilics of $\Sigma$.)

We will be interested in umbilics of hypersurfaces defined by smooth functions $p:M\to \R$, this is why we introduce the following definition.
\begin{definition}Let $(M, g)$ be a smooth Riemannian manifold. For a smooth function $p:M\to \R$ such that the equation $p=0$ is regular on $M$ and for every $w\in \N^{n-1}$ we define $\Sigma^w(p)\subset M$ as the set of $w$--umbilics of the zero set of $p$. 
\end{definition}

In this case, it is useful to observe the following fact. 

\begin{lemma}\label{lemma:useful}Let $(M, g)$ be a smooth Riemannian manifold and let $p:M\to \R$ be a smooth function such that the equation $p=0$ is regular on $M$ and define $\Sigma:=\{p=0\}$ (a smooth hypersurface of $M$). Then for every $z\in \Sigma$ we have an identity of quadratic forms:
$$\|\grad_z (p)\| h_z^\Sigma=-\mathrm{he}(p)_z|_{T_{z}\Sigma},$$
where $\mathrm{he}(p)$ denotes the covariant Hessian and $\grad(p)$ the Riemannian gradient.
\end{lemma}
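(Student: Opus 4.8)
The plan is to derive the identity by unwinding the definition of the second fundamental form in \eqref{eq:second} and relating the relevant covariant derivatives on $\Sigma$ to those on $M$. First I would observe that since $p=0$ is regular, the gradient $\grad(p)$ is nowhere zero on $\Sigma$, and the unit normal can be taken to be $\nu_z = \grad_z(p)/\|\grad_z(p)\|$. Then for $v \in T_z\Sigma$, I want to compute $h_z^\Sigma(v) = g_z(\nu_z, \nabla_v v)$, where the $v$ inside $\nabla_v v$ is implicitly extended to a vector field tangent to $\Sigma$ near $z$.

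The key computational step is the following: for any vector field $V$ tangent to $\Sigma$, the function $g(\grad(p), V) = dp(V) = V(p)$ vanishes identically on $\Sigma$, since $p$ is constant on $\Sigma$. Differentiating this identity along $V$ and using that $\nabla$ is metric-compatible gives $0 = V\bigl(g(\grad(p), V)\bigr) = g(\nabla_V \grad(p), V) + g(\grad(p), \nabla_V V)$ at points of $\Sigma$. Now $g(\nabla_V \grad(p), V)$ is, by definition, the covariant Hessian $\mathrm{he}(p)(V,V)$ evaluated on $V$; and $g(\grad(p), \nabla_V V) = \|\grad(p)\| \, g(\nu, \nabla_V V) = \|\grad(p)\| \, h^\Sigma(V)$ — here I only need the normal component of $\nabla_V V$, so the ambiguity in extending $V$ off $\Sigma$ does not matter, which is exactly why $h^\Sigma$ is tensorial. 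Combining, $\|\grad_z(p)\| \, h_z^\Sigma(v) = -\mathrm{he}(p)_z(v,v) = -\mathrm{he}(p)_z|_{T_z\Sigma}(v)$, which is the claimed identity of quadratic forms on $T_z\Sigma$.

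I would present this cleanly by fixing $z\in\Sigma$ and $v\in T_z\Sigma$, choosing a smooth vector field $V$ on $M$ tangent to $\Sigma$ along $\Sigma$ with $V_z = v$, writing out the polarization-free scalar identity $g(\grad p, V) \equiv 0$ on $\Sigma$, and differentiating. The only subtlety worth a sentence is that $\mathrm{he}(p)(V,V) = g(\nabla_V\grad(p),V)$ is the standard coordinate-free definition of the covariant Hessian as a symmetric bilinear form, so no symmetrization is lost; and that the restriction to $T_z\Sigma$ on the right-hand side is exactly what the computation produces since we only ever plug tangent vectors into $\mathrm{he}(p)$.

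The main obstacle — really the only place care is needed — is bookkeeping about which quantities are tensorial versus which depend on the chosen extension $V$: $\mathrm{he}(p)(v,v)$ depends only on $v$, and $h^\Sigma(v)$ depends only on $v$ (being the second fundamental form), but the intermediate terms $\nabla_V\grad(p)$ and $\nabla_V V$ individually depend on $V$; one must check that the combination appearing, and in particular the pairing with the fixed vectors $\grad_z(p)$ and $v$, is extension-independent. This is routine but should be stated. No deeper difficulty is expected.
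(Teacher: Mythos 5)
Your proof is correct and is essentially the same computation as in the paper: both derive the identity from the constraint that $vp = g(\grad p, v)$ vanishes on $\Sigma$ for $v$ tangent to $\Sigma$, then match the normal part of $\nabla_v v$ with $h^\Sigma$. The only cosmetic difference is that you start from the form $\mathrm{he}(p)(V,V) = g(\nabla_V \grad p, V)$ and apply metric compatibility, whereas the paper starts from $\mathrm{he}(p)(v) = v(vp) - (\nabla_v v)p$ and kills the first term directly; these are two faces of the same Leibniz identity.
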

\begin{proof}Recall first the definition of the covariant Hessian of $p$:
\be \mathrm{he}(p)(v):=v(v p)-(\nabla_vv)p.\ee
When restricted to $u\in T_z\Sigma=\mathrm{ker}(D_zp)$, we have
$$\mathrm{he}(p)_z(u)=-((\nabla_uu)p)_z=-D_zp(\nabla_u u).$$
On the other hand, by definition \eqref{eq:second}, we have
$$h_z^\Sigma(u)=g_z(\nabla_u u, \nu(z))=g_z\left(\nabla_u u, \frac{\grad_z(p)}{\|\grad_z(p)\|}\right)=\frac{1}{\|\grad_z(p)\|}D_zp(\nabla_u u).$$
From this the conclusion follows.
\end{proof}

Clearly, the definition of $w$--umbilics of $\Sigma$ depends on the ambient Riemannian metric, but it is actually a conformal invariant. The following proposition is probably well--known to expert, but we were unable to locate it in the literature.

\begin{proposition}\label{propo:conformal}Let $(M, g)$ be a smooth $n$--dimensional Riemannian manifold and $\Sigma\subset M$ be a smooth hypersurface. For every $w=(w_1, \ldots, w_{n-1})\in \N^n$ such that $\mu(w)=n-1$ the set of $w$--umbilics of $\Sigma$ depends only on the conformal class of $g$.
\end{proposition}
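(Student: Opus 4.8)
The plan is to reduce the conformal invariance of the $w$-umbilics to the transformation law of the second fundamental form under a conformal change of metric, and then observe that this law affects the symmetric matrix representing $h_z^\Sigma$ only by a scalar multiple plus a multiple of the identity, neither of which changes which stratum $\mathscr{C}_{n-1}^w$ the matrix lies in. Concretely, suppose $\widetilde g = e^{2\sigma} g$ for some $\sigma \in C^\infty(M)$. First I would recall (or derive) the standard formula relating the second fundamental forms: if $\nu_z$ is a $g$-unit normal to $\Sigma$ at $z$, then $\widetilde\nu_z = e^{-\sigma(z)}\nu_z$ is a $\widetilde g$-unit normal, and a short computation with the Koszul formula for the Levi--Civita connection $\widetilde\nabla$ of $\widetilde g$ gives, for $v \in T_z\Sigma$,
\be
h_z^{\Sigma,\widetilde g}(v) = e^{\sigma(z)}\Bigl( h_z^{\Sigma, g}(v) - \bigl(\nu_z \sigma\bigr)\, g_z(v,v) \Bigr).
\ee
The key point is the structure of the right-hand side: the new second fundamental form is a positive scalar times the old one minus a scalar times the metric $g_z|_{T_z\Sigma}$.

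Next I would translate this into matrix language. Fix $z \in \Sigma$. Choose a basis of $T_z\Sigma$ that is $g$-orthonormal; let $A$ be the symmetric matrix of $h_z^{\Sigma,g}$ in this basis, so in the same basis $g_z|_{T_z\Sigma}$ is the identity $\mathbf 1_{n-1}$. Then the matrix of $h_z^{\Sigma,\widetilde g}$ in this (same) basis is $\lambda(A - c\,\mathbf 1_{n-1})$ with $\lambda = e^{\sigma(z)} > 0$ and $c = \nu_z\sigma \in \R$. To read off the stratum correctly one should use a $\widetilde g$-orthonormal basis, but passing from the $g$-orthonormal basis to a $\widetilde g$-orthonormal one is a congruence $B \mapsto P^T B P$ with $P$ invertible; since $g_z|_{T_z\Sigma}$ is conformal to $\widetilde g_z|_{T_z\Sigma}$, one can take $P = e^{-\sigma(z)/2}O$ for a suitable orthogonal $O$, so the congruence is in fact an orthogonal conjugation composed with an overall positive scaling. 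Either way, the upshot is that the matrix representing $h_z^{\Sigma,\widetilde g}$ in a $\widetilde g$-orthonormal basis is orthogonally conjugate to $\mu(A - c\,\mathbf 1_{n-1})$ for some $\mu>0$, $c\in\R$.

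Finally I would invoke the elementary fact that the strata $\mathscr{C}_m^w$ of $\mathrm{Sym}(m,\R)$ are invariant under the maps $B \mapsto \mu B + c\,\mathbf 1_m$ for $\mu > 0$, $c \in \R$, and under orthogonal conjugation: indeed the eigenvalues of $\mu B + c\,\mathbf 1_m$ are $\mu\lambda_i + c$ where $\lambda_i$ are those of $B$, and an affine map with $\mu>0$ is a bijection $\R\to\R$, hence preserves the multiplicity pattern $(w_1,\dots,w_m)$ exactly; orthogonal conjugation preserves eigenvalues altogether. Applying this with $m = n-1$, $B = A$: the matrix of $h_z^{\Sigma,g}$ lies in $\mathscr{C}_{n-1}^w$ if and only if the matrix of $h_z^{\Sigma,\widetilde g}$ does. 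Since this holds at every $z\in\Sigma$, the sets $\Sigma^w$ computed with respect to $g$ and with respect to $\widetilde g$ coincide, which is exactly the claim. (For a general conformal class one notes $\widetilde g = e^{2\sigma}g$ is the general form of a metric conformal to $g$, so this covers all cases; there is no topological obstruction since $\sigma$ is globally defined.)

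I expect the only real obstacle to be getting the conformal transformation law for $h_z^\Sigma$ with the correct scalar factors — this is a routine but error-prone Koszul-formula computation, and one must be careful that the normalization of $\nu_z$ changes with the metric and that the term $(\nu_z\sigma)g_z(v,v)$ has the right sign and power of $e^\sigma$. One convenient shortcut is to use \cref{lemma:useful}: if $\Sigma = \{p=0\}$ then $\|\grad_z p\|\,h_z^\Sigma = -\mathrm{he}(p)_z|_{T_z\Sigma}$, and one can instead track how the covariant Hessian and the gradient norm transform under $g \mapsto e^{2\sigma}g$; the Hessian picks up first-derivative correction terms involving $\sigma$, but when restricted to $T_z\Sigma = \ker D_z p$ many of these drop out, again leaving a scalar multiple of $\mathrm{he}(p)_z|_{T_z\Sigma}$ plus a multiple of $g_z|_{T_z\Sigma}$. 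Either route leads to the same conclusion; the matrix-theoretic step and the stratum-invariance step are then immediate.
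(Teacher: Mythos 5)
Your proposal is correct and follows essentially the same route as the paper: compute the conformal transformation law $\widetilde h_z^\Sigma(v) = e^{\sigma(z)}\bigl(h_z^\Sigma(v) - (\nu_z\sigma)\,g_z(v,v)\bigr)$ via the Koszul formula, pass to a $\widetilde g$-orthonormal basis by rescaling, and observe that the resulting matrix is a positive scalar times the old one plus a multiple of $\mathbf 1_{n-1}$, which preserves the eigenvalue multiplicity pattern. One small slip: the rescaling sending a $g$-orthonormal basis to a $\widetilde g$-orthonormal one is $v_i \mapsto e^{-\sigma(z)} v_i$, not $e^{-\sigma(z)/2} v_i$ (so $P = e^{-\sigma(z)}O$, giving $\widetilde H = e^{-\sigma(z)}(A - c\,\mathbf 1_{n-1})$ as in the paper), but since either way the congruence is orthogonal conjugation followed by a positive scalar, this does not affect the conclusion.
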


\begin{proof}
{
Let $\f \in C^\infty(M,\R)$ and let us consider the conformally equivalent metric $\widetilde{g}:=e^{2\f} g$ on $M$. For any metric quantity $X$ associated with $g$, we will denote by $\widetilde{X}$
, the analogous quantity relative to $\widetilde{g}$. The relation between the Levi--Civita connections of the two metrics is given by
\be
\widetilde{\nabla}_VW=\nabla_VW+V(\f)W+W(\f)V-g(V,W)\grad (\f ).
\ee
(This follows from the Koszul identity.)
From this we deduce the relation between the two second fundamental forms of the hypersurface $\Sigma$. Let $V$ be a smooth vector field $V$ on $M$ with the property that $V(z)\in T_z\Sigma$ for all $z\in \Sigma$. Then, for any such $z\in \Sigma$ and $v=V(z)\in T_z\Sigma$, we have
\be 
\begin{aligned}
\widetilde h_z^{\Sigma}(v)&=\widetilde g_z(\widetilde \nu_z, \widetilde \nabla_VV|_z)
\\
&=
e^{2\f(z)} g_z\left(\frac{1}{e^{\f(z)}}\nu_z, \left(\nabla_VV+2V(\f)V-g(V,V)\grad (\f )\right)|_z\right)
\\
&=
e^{\f(z)} g_z\left( \nu_z, \nabla_vV-g_z(v,v)\grad_z (\f )\right)
\\
&=
e^{\f(z)} \left(
h^\Sigma_z(v)-g_z(v,v)D_z\f(\nu_z).
\right)
\end{aligned}
\ee
Let $v_1,\dots,v_{n-1}$ be a basis of $T_z\Sigma$ that is orthonormal for the metric $g$ and let $H$ be the matrix that represents the bilinear symmetric form associated to $h^\Sigma_z$, with respect to the chosen basis. Then, the vectors $\widetilde{v}_i:=e^{-\f(z)}v_i$ form an orthonormal basis for $\widetilde{g}$ and, by the above computation, the matrix $\widetilde H$ that represents $\widetilde{h}^\Sigma_z$ in the basis $\widetilde{v}_1,\dots,\widetilde{v}_{n-1}$ is
\be
\widetilde{H}=e^{-\f(z)}\left(H-D_z\f(\nu_z)\cdot \mathbf{1}_{n-1}
\right).
\ee
The eigenspaces of $\widetilde H$ coincide with those of $H$, thus $H\in \mathscr{C}_n^w$ if and only if $\widetilde H\in \mathscr{C}_n^w$. This yields that $z$ is a $w$-umbilic of $\Sigma$ in $(M,g)$ if and only if it is in $(M,\widetilde{g})$.
}
\end{proof}

Let now $z\in S^n\subset \R^{n+1}$ and denote by $\phi_z:\R^n\to S^n\setminus\{z\}$ the inverse of the stereographic projection from $z${, divided by two}. If $z=-e_0=(-1, 0, \ldots,0)$ this map  can be explicitly written as
\be \label{eq:stereographic}\phi_{-e_0}(y)={\frac12 \frac{1}{(1+\|y\|^2)}\left(\begin{array}{c}1-\|y\|^2\\2y\end{array}\right).}%\frac{1}{1+\|y\|^2}\left(\begin{array}{c}1-\|y\|^2 \\0\end{array}\right).
\ee
The general case $z\neq -e_0$ can be obtained by composing \eqref{eq:stereographic} with an orthogonal transformation.
It is a well known fact (and immediate to check) that for every $z\in S^n$ the map 
$$\phi_z:(\R^n, g_{\R^n})\to (S^n\setminus \{z\}, g_{S^n}),$$
where $g_{\R^n}$ denotes the standard Riemannian structure on $\R^n$ and $g_{S^n}=g_{\R^{n+1}}|_{S^n}$ is the standard Riemannian structure on $S^n$ induced by the ambient Euclidean space, is a \emph{conformal map}.\footnote{Note that the stereographic projection from the center of the sphere to its tangent space \emph{is not} conformal (unless $n=1$).} In fact
\be\label{eq:conformal}\phi_z^*g_{S^n}=\frac{{1}}{(1+\|y\|^2)^2}\cdot g_{\R^n}.\ee

It follows from \eqref{eq:conformal} and \cref{propo:conformal}, that for every $z\in S^{n}$, if $p:S^n\to \R$ is smooth with $p=0$ regular,
$$\phi_z^{-1}\left(\Sigma^w(p)\right)=\Sigma^w(p\circ \phi_z),$$
i.e. the structure of the $w$--umbilics of $\{p=0\}$ (with respect to the Riemannian metric of the sphere) is the same as the structure of the $w$--umbilics of $\{\phi_z\circ p=0\}$ (with respect to the metric of $\R^n$).

Let now $p=(p_1, \ldots, p_k)\in P_{n,d}^k$. (We will need to work just in the case $k=1$, but we state the next property in greater generality.) We modify now the above definition \eqref{eq:rescaling} in order to study umbilical properties.  For every $z\in S^n$ we consider the smooth map $\widehat{f}_{z,d}:D\to \R$ defined by
\be \label{eq:rescaling2}\widehat{f}_{z,d}(v):=d^{-\frac{n}{2}}p\left(\phi_{z}(d^{-1} v)\right).\ee
As above, since we have defined a probability measure $\mu_d$ on the space of polynomials, the map $\widehat{f}_{z,d}:D\to \R^k$ (which depends on $p$) can be thought as a random variable with values in $C^{\infty}(D, \R^k)$. The following proposition substitutes \eqref{eq:lowerlimit}.

\begin{proposition}\label{propo:lower2}For every nonempty open set $U\subseteq C^{\infty}(D, \R^k)$ there exists ${c(U)}>0$ such that for every $z\in S^n$, 
\be \label{eq:lowerlimit2}\liminf_{d\to \infty}\mu_d\bigg\{\widehat{f}_{z,d}\in U\bigg\}\geq c(U).\ee
\end{proposition}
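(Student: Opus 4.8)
The plan is to reproduce, essentially word for word, the proof of \eqref{eq:lowerlimit} given above, the only new point being to check that replacing the Riemannian exponential $\exp_z$ by the conformal chart $\phi_z$ in the rescaling \eqref{eq:rescaling2} does not alter the local scaling limit of the covariance. Write $\widehat{K}_d(u,v):=\mathbb{E}\big(\widehat{f}_{z,d}(u)\widehat{f}_{z,d}(v)^T\big)$ for the covariance of $\widehat{f}_{z,d}$ and set $w_z:=\phi_z(0)\in S^n$. First I would record the relevant geometric fact: by \eqref{eq:conformal} the conformal factor of $\phi_z$ equals $1$ at the origin, so $D_0\phi_z\colon \R^n\to T_{w_z}S^n$ is a \emph{linear isometry}; hence, since $D_0\exp_{w_z}$ is the identity, the germ $\theta_z:=\exp_{w_z}^{-1}\circ\,\phi_z$ is a diffeomorphism of $(\R^n,0)$ fixing the origin with $A_z:=D_0\theta_z\in O(n)$. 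Setting $\Theta_{z,d}(v):=d\,\theta_z(d^{-1}v)$, a Taylor expansion of $\theta_z$ at the origin shows that $\Theta_{z,d}\to A_z$ in $C^\infty$ on compact sets as $d\to\infty$, and a direct substitution gives $\widehat{f}_{z,d}=f_{w_z,d}\circ\Theta_{z,d}$, where $f_{w_z,d}$ is the exponential rescaling \eqref{eq:rescaling} based at $w_z$. (It is convenient to set up \eqref{eq:rescaling}, \eqref{eq:limitcovariance} and \eqref{eq:lowerlimit} on a disk $D(0,2)\supset\Theta_{z,d}(\overline D)$ from the outset, which is harmless and dispatches the only domain issue.)

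Taking covariances gives $\widehat{K}_d(u,v)=K_d\big(\Theta_{z,d}(u),\Theta_{z,d}(v)\big)$, where $K_d$ is the base-point-independent covariance of the exponential rescaling; by \eqref{eq:limitcovariance} we have $K_d\to K_\infty$ in $C^\infty$, so $\widehat{K}_d\to K_\infty\circ(A_z\times A_z)=K_\infty$ in $C^\infty(D\times D)$, the last equality because $K_\infty$ in \eqref{eq:limitcovariance} is invariant under the orthogonal group. From here the proof of \eqref{eq:lowerlimit} applies verbatim: since $\widehat{K}_d\to K_\infty$ in $C^\infty$, \cite[Theorem 5]{DTGRF} produces a random field $\widehat{f}_\infty\in C^\infty(D,\R^k)$ with $\liminf_{d\to\infty}\mu_d\{\widehat{f}_{z,d}\in U\}\geq \mathbb{P}\{\widehat{f}_\infty\in U\}=:c(U)$ for every nonempty open $U$, and $c(U)$ is independent of $z$ because the limiting kernel $K_\infty$ is; finally, since this limiting kernel is exactly the one in \eqref{eq:limitcovariance}, \cite[Theorem 6]{DTGRF} together with \cite[Lemma 3.2]{Ancona} and the density of polynomials in $C^\infty(D,\R^k)$ show that the support of $\widehat{f}_\infty$ is all of $C^\infty(D,\R^k)$, so $c(U)>0$.

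The only content beyond the proof of \eqref{eq:lowerlimit} is the identity $\widehat{f}_{z,d}=f_{w_z,d}\circ\Theta_{z,d}$ with $\Theta_{z,d}\to A_z\in O(n)$ in $C^\infty_{\mathrm{loc}}$: this is the precise way of saying that the scaling limit of the covariance feels only the $1$-jet at the origin of the chart used to rescale. This is where conformality enters, and it is also why $A_z$ lands in $O(n)$ rather than in $\mathrm{GL}(n,\R)$, so that the rotation invariance of $K_\infty$ can absorb it. I expect this to be routine; the only mildly fiddly point is the bookkeeping of domains, already handled above by enlarging the reference disk from $D$ to $D(0,2)$.
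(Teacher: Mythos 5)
Your proof is correct and follows the same route as the paper's: both reduce to showing that the covariance of $\widehat f_{z,d}$ converges to the same $C^\infty$ limit $K_\infty$ as the exponential rescaling, by writing $\widehat f_{z,d}$ as $f_d$ precomposed with the transition map between the conformal chart and the exponential chart, and observing that the rescaled transition map converges in $C^\infty_{\mathrm{loc}}$ to a linear map. The only (harmless) difference is bookkeeping: you allow the limiting derivative $A_z$ to be an arbitrary element of $O(n)$ and absorb it via the rotational invariance of $K_\infty$, whereas the paper asserts directly that $d_0\tau=\mathrm{id}_{\R^n}$ after having set up $\exp_z$ and $\phi_z$ so that they share the same base point and the same identification $T_zS^n\cong\R^n$; your version is a bit more robust to the choice of conventions and makes the change-of-variables identity for the covariance explicit rather than leaving it implicit in the appeal to H\"ormander's asymptotics.
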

\begin{remark}In fact, from the proof it will follow that the number $c(U)>0$ is the same as in  \eqref{eq:defc(U)}.
\end{remark}
\begin{proof}
{We rehearse the proof of \eqref{eq:lowerlimit}. The only difference is that this time we have to study the covariance function of $\widehat{f}_d:=\widehat{f}_{z,d}$, which we denote by $\widehat{K}_{d}(x,y):D\times D\to \R^{k\times k}$. It follows that we have the following relation between the two random fields:
\be 
\widehat{f}_d(u)=f_d\left(d\cdot \mathrm{exp}_z^{-1}(\phi_z(d^{-1}u))\right)=f_d\left(d\cdot \tau (d^{-1}u)\right),
\ee
where we denote by $\tau\colon \R^n\to D(0,\pi)\subset \R^n$ the (deterministic) function defined as $\tau:=\mathrm{exp}_z^{-1}\circ \phi_z$. It is easy to check that this function is smooth, that $\tau(0)=0$ and that $d_0\tau=\mathrm{id}_{\R^n}$. From this, we conclude that as $d\to \infty$, the function $u\mapsto d\cdot \tau (d^{-1}u)$ converges in the $C^\infty(\R^n,\R^n)$-topology to the identity function $d_0\tau$. This, together with \cite{Hrmander1968TheSF}, implies that the covariance function $\widehat{K}_{d}$ converges in the $C^\infty$--topology to the function $\widehat{K}_\infty$, defined as in \eqref{eq:limitcovariance}.
}
\end{proof}

We are now in the position of proving the following result.

 \begin{theorem}\label{thm:umbilics}For every $w=(w_1, \ldots, w_{n-1})\in \N^{n-1}$ such that $\mu(w)=n-1$ and such that $k(w)\leq n-1$ there exists $d(w)\in \N$ and $\widehat{c}_2(w)>0$ such that for every $d\geq d(w)$ there exists an open set $U_d^w\subset \R[x_0, \ldots, x_n]_{(d)}$ such that for every $p\in U_d^w$ and for every $j=0, \ldots, n-1-k(w)$
$$b_j(\Sigma^w(p))\geq \widehat{c}_2(w) d^n.$$\end{theorem}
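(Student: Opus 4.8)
The plan is to run the six steps of \cref{sec:proof} with the singularity data adapted to umbilics. The first point is to package the $w$--umbilic condition as a jet condition. By \cref{lemma:useful}, a point $z$ belongs to $\Sigma^w(p)$ exactly when $p(z)=0$, $\grad_z p\neq 0$, and the symmetric bilinear form $\mathrm{he}(p)_z|_{T_z\{p=0\}}$, read in an orthonormal basis of $T_z\{p=0\}$, lies in the cone over $\mathscr{C}_{n-1}^w\subset\mathrm{Sym}(n-1,\R)$. Since $\mathscr{C}_{n-1}^w$ is semialgebraic \cite{SV} and, on the round sphere, the covariant Hessian of a polynomial depends algebraically on its $2$--jet and on the base point, this defines a semialgebraic set $W_w\subseteq J^2(S^n,\R)$ with $\Sigma^w(p)=(j^2p)^{-1}(W_w)$; by \eqref{eq:codimension} its codimension away from the critical locus of $p$ is $1+k(w)$, which is $\le n$ precisely under the hypothesis $k(w)\le n-1$. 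In particular the generic upper bound \eqref{eq:bound} applies and gives $b(\Sigma^w(p))\le c_1(w)d^n$ off a proper algebraic set, and $n-\mathrm{codim}(W_w)=n-1-k(w)$, which is exactly the range of indices appearing in the statement.

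Step 1, the analogue of \cref{sec:step1}, is to produce $f_0\in C^\infty(D,\R)$ on the unit disk $D\subset\R^n$ equipped with the flat metric, such that $0$ is a regular value of $f_0$, the flat $w$--umbilic set $\Sigma^w(f_0)$ is contained in $\mathrm{int}(D)$, the jet $j^2f_0$ is transverse to the flat model $W_{w,0}$ of $W_w$, and $b_j(\Sigma^w(f_0))\ge 1$ for every $0\le j\le n-1-k(w)$ (for instance one can aim for $\Sigma^w(f_0)$ a torus of dimension $n-1-k(w)$). As in \cref{sec:step1} one first writes down a (non--holonomic) section of $J^2(n,1)\to\R^n$ transverse to $W_{w,0}$, supported in $\mathrm{int}(D)$, whose preimage carries a nontrivial $j$--cycle for each $j\le n-1-k(w)$; this is where the bound $k(w)\le n-1$ is used, to leave room for the construction. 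One then makes the section holonomic by the Holonomic Approximation Theorem \cite{EliMis} (as in \cite[Corollary 20]{MTTRPS}), keeping transversality and the confinement of the singularity to $\mathrm{int}(D)$, and applies \cite[Theorem 18]{MTTRPS} to retain the lower bounds on the Betti numbers.

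The remaining steps then copy \cref{sec:open,sec:step3,sec:step4,sec:step5} with $\mathrm{exp}_z$ replaced by the conformal chart $\phi_z$ and with \cref{propo:lower2} in place of \eqref{eq:lowerlimit}. Transversality of $j^2f_0$ to $W_{w,0}$ together with Thom's isotopy lemma gives a $C^3$--neighbourhood $U_0$ of $f_0$ such that $(D,\Sigma^w(g))$ is diffeomorphic to $(D,\Sigma^w(f_0))$ for every $g\in U_0$; one keeps the Gaussian measure $\mu_d$ of \cref{sec:step3}; and for the rescaled field $\widehat f_{z,d}$ of \eqref{eq:rescaling2}, \cref{propo:lower2} provides $c(U_0)>0$ with $\liminf_{d\to\infty}\mu_d\{\widehat f_{z,d}\in U_0\}\ge c(U_0)$ for all $z$. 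The reason for using $\phi_z$ rather than $\mathrm{exp}_z$ is that $\phi_z$ is conformal: by \cref{propo:conformal} one has $\phi_z^{-1}(\Sigma^w(p))=\Sigma^w(p\circ\phi_z)$, and since the homothety $v\mapsto d^{-1}v$ and the multiplication by $d^{-n/2}$ are harmless, $\Sigma^w(\widehat f_{z,d})=d\cdot\phi_z^{-1}(\Sigma^w(p))$; thus the event $\{\widehat f_{z,d}\in U_0\}$ forces a connected component of $\Sigma^w(p)$, lying inside the region $\phi_z(D(0,d^{-1}))$ of diameter $O(d^{-1})$ around $\phi_z(0)$, to be diffeomorphic to $\Sigma^w(f_0)$. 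The doubling argument of \cref{sec:step5} then produces $m_d\ge c_3d^n$ pairwise disjoint such regions, and, writing $\nu(p)$ for the number of components of $\Sigma^w(p)$ homeomorphic to $\Sigma^w(f_0)$, the same chain of inequalities yields $\int_{\Po_{n,d}}\nu(p)\,\mu_d(\mathrm{d}p)\ge\widehat c(w)d^n$ for all $d$ large, with $\widehat c(w):=c_3\,c(U_0)>0$.

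Finally, as in \cref{sec:step6}, this integral is at most $\max\nu(p)$ over the full--measure semialgebraic set of $p$ with $j^2p$ transverse to $W_w$, so for $d\ge d(w)$ there is such a $p$ with $\nu(p)\ge\widehat c(w)d^n$; for it $b_j(\Sigma^w(p))\ge\nu(p)\,b_j(\Sigma^w(f_0))\ge\widehat c_2(w)d^n$ for every $0\le j\le n-1-k(w)$, where $\widehat c_2(w):=\widehat c(w)\min_j b_j(\Sigma^w(f_0))>0$, and since transversality is an open condition a whole neighbourhood $U_d^w$ of $p$ works. I expect the main obstacle to be Step 1: one must check that the Arnold stratum $\mathscr{C}_{n-1}^w$, pulled back through the ``normalized restricted Hessian'' map to a stratified subset of $J^2(n,1)$, is tame enough for the holonomic--approximation and Thom--isotopy machinery of \cite{MTTRPS,EliMis} to run — note that $W_w$, unlike the intrinsic sets of \cref{thm:main}, is only conformally invariant, so its flat model must be handled directly — and that a non--holonomic section can be arranged to meet it with nonzero homology in every degree $\le n-1-k(w)$, which is precisely what the hypothesis $k(w)\le n-1$ provides. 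Everything else is a transcription of \cref{sec:proof}, the conformal bridge being supplied by \cref{propo:conformal} and \cref{propo:lower2}, which were set up for this purpose.
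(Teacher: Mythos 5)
Your proposal follows the paper's own proof essentially step for step: define the flat model $W_0^w\subset J^2(n,1)$ via \cref{lemma:useful} (so that $\Sigma^w(f)=j^2f^{-1}(W_0^w)$ for the flat metric), run the realization/transversality/open-set machinery of \cref{sec:step1,sec:open}, keep the Gaussian $L^2(S^n)$ measure, rescale via the conformal chart $\phi_z$ rather than $\exp_z$, invoke \cref{propo:lower2} for the local limit and \cref{propo:conformal} to transport the diffeomorphism type of $(D,\Sigma^w(\widehat f_{z,d}))$ to $(B(z,d^{-1}),\Sigma^w(p)\cap B(z,d^{-1}))$, then repeat the doubling argument and conclude as in \cref{sec:step6}. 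Your remarks — the codimension count $1+k(w)\le n$, the need to work with the flat model directly because $W^w$ is only conformally (not diffeomorphically) invariant, and the identification of Step 1 (holonomic approximation plus \cite[Theorem 18, Corollary 20]{MTTRPS}) as the delicate point — are all accurate and coincide with what the paper does or leaves implicit; there is no genuinely different route here.
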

\begin{proof}The proof proceeds following the same steps as in \cref{sec:proof}, with some small modifications.

Given $w=(w_1, \ldots, w_{n-1})\in \N^{n-1}$ such that $\mu(w)=n-1$ and such that $k(w)\leq n-1$, denote by $W_0^w\subset J^2(n,1)$ the semialgebraic set
\be\label{eq:w}W_0^w:=\bigg\{(x, a, u, q)\in \R^n\times \R\times \R^n\times \mathrm{sym}^2(\R^n)\,\bigg|\, a=0, \,q|_{u^\perp}\in \mathscr{C}^w\bigg\}.\ee
(The notation ``$q|_{u^\perp}\in \mathscr{C}^w$'' means that the symmetric matrix representing $q|_{u^\perp}$ in some orthonormal basis for $u^\perp$ is in $\mathscr{C}^w$.) Then, because of \cref{lemma:useful}, for a smooth function $f:D\to \R$ with $f=0$ a regular equation, we have:
$$\Sigma^w(f)=j^2f^{-1}(W_0^w).$$

Then, as in \cref{sec:step1}, we can find a smooth function $f:D\to \R$ such that (i) $j^2f$ is transversal to $W_0^w$, (ii) $\Sigma_0:=j^2f^{-1}(W_0^w)\subset \mathrm{int}(D)$ and (iii) $b_i(j^2f^{-1}(W_0^w))\geq 1$ for every $0\leq i\leq n-1-k(w)$. As in \cref{sec:open}, this defines an open set $U_0$ in the $C^\infty$--topology such that every $g\in U_0$ also has these properties. 
We choose on the space of polynomials the same measure as in \cref{sec:step3}. 

Instead of \eqref{eq:rescaling}, we consider now the function $\widehat{f}_{z,d}$ defined in \eqref{eq:rescaling2}. To compute the limit probability of the open set $U_0$, instead of \eqref{eq:lowerlimit}, we use now \cref{propo:lower2} and obtain for every $z\in S^n$:
$$\liminf_{d\to \infty}\mu_d\bigg\{\widehat{f}_{z,d}\in U_0\bigg\}\geq c(U_0)>0.$$

Observe now that, since the map $\phi_z\left(d^{-1}\cdot \right)\colon D\to B(z,d^{-1})$ is conformal, \cref{propo:conformal} implies that the pairs
$$\left(D, \Sigma^w(\widehat{f}_{z,d})\right)\quad\textrm{and}\quad \bigg(B(z, d^{-1}), \Sigma^w(p)\cap B(z, d^{-1})\bigg) \quad \textrm{are diffeomorphic}.$$
(The fact that $\Sigma^w$ is a conformal invariant substitutes the requirement that the singularity is diffeomorphism invariant.)

The proof can proceed now as in \cref{sec:step5}:  we denote by $\nu:P_{n,d}\to \N$ the function
counting the number of connected components of $\Sigma^{w}(p)$ which are homeomorphic to $\Sigma_0$, 
and we show that $\int_{\Po_{n,d}}\nu(p)\,\mu_d(\mathrm{d}p)\geq \widetilde{c}(w)d^n$ for $d\geq d(w)$ and for some $\widetilde{c}(w)>0$.
Then, reasoning exactly as in \cref{sec:step6} concludes the argument.
\end{proof}
\begin{remark}Since the double cover $S^n\to \RP^n$ is a local isometry, the same result holds true for the $w$--umbilical points of the projective zero set of $p$.
\end{remark}

\begin{remark}Let $p\in P_{n,d}$ and pick $z\in S^n$ such that $p(z)\neq 0$. Let also $\varphi:=\phi_z^{-1}:S^{n}\setminus \{-z\}\to \R^n$ be the stereographic projection. For every $w=(w_1, \ldots, w_{n-1})\in \N^{n-1}$ such that $\mu(w)=n-1$ and such that $k(w)\leq n-1$, let $W^w\subset J^2(S^n, \R)$ be the subset $W^w:=\varphi^{-1}(W_0^w)$, where $W_0^w$ is defined in \eqref{eq:w}. Since $\varphi$ is semialgebraic, then so is $W^w$. Moreover, $j^2p$ is transversal to $W$ if and only if $j^2(p\circ\varphi)$ is transversal to $W^w$ and, by \cref{propo:conformal}, $j^2p^{-1}(W^w)$ equals the set of $w$--umbilics of $\{p=0\}$.
 Therefore we can use  \cite[Theorem 1]{MTTRPS} and conclude that there exists $\widehat{c}_1(w)>0$ such that for the generic polynomial $p:S^n\to\R$ of degree $d$, the sum of the Betti numbers of the $w$--umbilics of $\{p=0\}$ is bounded by:
\be\label{eq:boundw}b(\Sigma^w(p))\leq \widehat{c}_1(w)d^n.\ee
\end{remark}

\bibliographystyle{alpha}
\bibliography{TPMIRST}

\end{document}